\newfont{\msam}{msam10}
\newtheorem{theorem}[]{Theorem}
\newtheorem{proposition}[]{Proposition}
\newtheorem{corollary}[]{Corollary}
\newtheorem{lemma}[]{Lemma}
\theoremstyle{definition}
\newtheorem{remark}[]{Remark}
\let\nc\newcommand
\nc{\la}{\label}
\def\bthm{\begin{theorem}}
\def\ethm{\end{theorem}}
\def\blemma{\begin{lemma}}
\def\elemma{\end{lemma}}
\def\bproof{\begin{proof}}
\def\eproof{\end{proof}}
\def\bprop{\begin{proposition}}
\def\eprop{\end{proposition}}
\def\bcor{\begin{corollary}}
\def\ecor{\end{corollary}}
\def\Z{\mathbb{Z}}
\def\c{\mathbb{C}}
\def\p{\mathbb{P}}
\nc{\Hom}{{\rm{Hom}}}
\nc{\Ext}{{\rm{Ext}}}
\nc{\HOM}{\underline{\rm{Hom}}}
\nc{\EXT}{\underline{\rm{Ext}}}
\nc{\TOR}{\underline{\rm{Tor}}}
\nc{\End}{{\rm{End}}}
\nc{\GL}{{\rm{GL}}}
\nc{\PGL}{{\rm{PGL}}}
\nc{\SL}{{\rm{SL}}}
\nc{\PSL}{{\rm{PSL}}}
\nc{\Rep}{{\rm{Rep}}}
\nc{\ad}{{\rm{ad}}}
\nc{\Ad}{{\rm{Ad}}}
\nc{\dlim}{\varinjlim}
\newcommand{\Aut}{{\rm{Aut}}}
\newcommand{\Bir}{{\rm{Bir}}}
\newcommand{\id}{{\rm{Id}}}
\begin{document}
\title[]{The group of unimodular automorphisms of $\c^2$ is hopfian}
%
%

\author{Alimjon Eshmatov}
\address{Department of Mathematics, University of Western Ontario, London, Ontario, N6A 5B7, Canada}
\email{aeshmato@uwo.ca}
\author{Farkhod Eshmatov}
\address{Department of Mathematics Indiana University, Bloomington, IN 47405, USA}
\curraddr{School of Mathematics, Sichuan University, Chengdu 610064, China}
\email{faeshmat@indiana.edu)}
%
\begin{abstract}
Let $G$ be the group of unimodular automorphisms of $\mathbb C^2$.
In the paper we prove two interesting results about this group.
The first one is about absence of non-trivial finite-dimensional representations of $G$.
The second one, we show that any non-trivial group endomorphism of $G$ is a monomorphism,
which implies that $G$ is hopfian.
\end{abstract}
\maketitle
\section{introduction}
Let $\Aut (\c^2)$ be the group of polynomial automorphisms of the complex plane.
Let $G$ be the subgroup of automorphisms with Jacobian equal to $1$.
It is known that $G$ can be written as the amalgamated product
\begin{equation}
\label{Aut}
G = A *_U B\ ,
\end{equation}
where $A$ is  the subgroup of symplectic affine automorphisms, $B$ is  the Jonqui\`eres subgroup
$$
A=\{ (ax+by+e, cx+dy+f)\} \, , \quad a, \ldots \, , f \in \c , \,  ad-bc = 1
$$
$$
B=\{ (ax+q(y), a^{-1}y+f)\} \, , \quad a \in \c^* \, , \, f \in \c \, , \, q(y) \in \c[y]
$$
and  $U= A \cap B$.

In \cite{Sh}, I. R. Shafarevich proved that $G$ is simple as an infinite dimensional algebraic group. 
However, V. Danilov showed that $G$ is not simple as an abstract group. In particular, he showed that there is
 an element of the \textit{algebraic length} $26$  (w.r.t. the above amalgamated product, see Section $2.1$ 
 for the precise definition), 
 whose normal closure is not equal to $G$.  Based on the work of Danilov, 
 J.-P. Furter and S. Lamy  \cite{FL}  showed that the  normal closure of any element is non-trivial only if 
 its length is at  least $14$ and equals to  $G$ if length is less or equal than $8$. 
 This is a main observation we use to show
\begin{theorem}
\la{nonexis}
There is no non-trivial finite dimensional representation of $G$.
\end{theorem}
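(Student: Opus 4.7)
The plan is to combine the Furter--Lamy small-length result with a Zariski/Jordan-type analysis of a distinguished family of elements of $B$. Given any finite-dimensional representation $\rho\colon G \to \GL_n(\c)$, it suffices to produce a single non-trivial $g \in G$ of length at most $8$ with $\rho(g) = I$: its normal closure is then all of $G$ by Furter--Lamy, so $\ker\rho = G$ and $\rho$ is trivial.

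The test elements will be the pure shears $E_m := (x+y^m, y) \in B$ for $m \geq 2$. Since $E_m$ is not affine for such $m$, we have $E_m \in B \setminus U$, so $E_m$ has length $1$ in $A *_U B$. The key input is the diagonal $D_a := (ax, a^{-1}y) \in U$ ($a \in \c^*$), for which a direct calculation gives
$$D_a \, E_m \, D_a^{-1} = (x + a^{m+1} y^m, y),$$
which equals $E_m^{k}$ whenever $a^{m+1} = k \in \Z_{\geq 1}$. In particular, for every $k \geq 1$ one can choose $a \in \c^*$ with $a^{m+1} = k$, obtaining the conjugacy $\rho(E_m) \sim \rho(E_m)^k$ inside $\GL_n(\c)$.

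From this symmetry I would first deduce that $\rho(E_m)$ is unipotent. Taking traces, $\tr(\rho(E_m)^k) = \tr(\rho(E_m))$ for every $k \geq 1$, so if $\lambda_1, \ldots, \lambda_r$ are the distinct eigenvalues of $\rho(E_m)$ with algebraic multiplicities $\mu_j$, the sequence $k \mapsto \sum_j \mu_j \lambda_j^k$ is constant on $\Z_{\geq 1}$. Linear independence of the distinct exponential sequences $k \mapsto \lambda^k$ forces $\mu_j = 0$ whenever $\lambda_j \neq 1$, so every eigenvalue of $\rho(E_m)$ equals $1$, i.e.\ $\rho(E_m)$ is unipotent.

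Specializing to $a = 2$ and setting $X_m := \log \rho(E_m) \in \mathfrak{gl}_n(\c)$ (a nilpotent matrix), passage to logarithms in the conjugation relation yields $\Ad(\rho(D_2))(X_m) = 2^{m+1}\, X_m$. Hence whenever $X_m \neq 0$, the scalar $2^{m+1}$ is an eigenvalue of the linear endomorphism $\Ad(\rho(D_2))$ of $\mathfrak{gl}_n(\c)$. But this endomorphism has only finitely many eigenvalues while $\{2^{m+1}\}_{m \geq 0}$ is infinite, so $X_m = 0$, i.e.\ $\rho(E_m) = I$, for all sufficiently large $m$. Any such $m \geq 2$ produces a non-trivial element $E_m \in \ker\rho$ of length $1$, and Furter--Lamy finishes the proof. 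The step most likely to require care is the unipotence deduction: the ``conjugate to every integer power'' condition a priori only traps the eigenvalues inside a finite set of roots of unity, and it is the trace/exponential-independence argument that upgrades this to all eigenvalues being $1$.
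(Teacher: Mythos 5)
Your proof is correct, but only its first half coincides with the paper's; the second half is a genuinely different argument. Both proofs begin by reducing, via Furter--Lamy (Theorem \ref{FL}), to producing one non-trivial element of small length in $\ker\rho$, and both exploit the diagonal conjugation $D_a\,E_m\,D_a^{-1}=(x+a^{m+1}y^m,y)$ to show that $\rho(E_m)$ is unipotent --- this is exactly Lemma \ref{eig}, except that your trace/Vandermonde argument is a somewhat cleaner way to pass from ``the eigenvalue multiset is stable under $k$-th powers'' to ``all eigenvalues are $1$'' than the paper's eigenvalue-set permutation argument. From there the paper goes structural: it shows $\rho$ is injective on the unitriangular subgroup $B_0$ (Proposition \ref{inj}), uses Maltsev's theorem together with divisibility of $[B_0,B_0]\cong\c[y]$ and of $B_0/[B_0,B_0]\cong\c$ to conjugate $\rho(B_0)$ into the unipotent group $U_n$ (Proposition \ref{u_n}), hence into a nilpotent group, and derives a contradiction with the non-nilpotency of $B_0$ (Proposition \ref{nonilp}). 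You instead set $X_m=\log\rho(E_m)\in\mathfrak{gl}_n(\c)$, note that the conjugation identity becomes $\Ad(\rho(D_2))X_m=2^{m+1}X_m$, and conclude by pigeonhole on the at most $n^2$ eigenvalues of $\Ad(\rho(D_2))$ that $X_m=0$ for all but finitely many $m$; this hands you an explicit non-trivial length-one element of the kernel and Furter--Lamy finishes. Your route is more elementary and self-contained --- it needs no Maltsev theorem, no divisible-group lemmas, and no derived-series computation --- and it localizes the kernel on a concrete shear; the paper's route costs more machinery but produces reusable structural statements about arbitrary finite-dimensional representations of $B_0$. (The only point to watch is the composition convention, which may replace $a^{m+1}$ by $a^{-(m+1)}$; since $a$ ranges over all of $\c^*$ this changes nothing.)
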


The group $\Aut (\c^2)$ can be naturally embedded into $\Bir(\p^2)$, and so does
$G$. Recently, J.D\'eserti \cite{De} has shown that any endomorphism of
$\Bir(\p^2)$ is injective.
 However this property is not functorial, therefore one can ask whether  
 $G$ is hopfian. Our main theorem 
\begin{theorem}
\la{hopf}
Any non-trivial endomorphism of $G$ is injective. In particular
$G$ is \textit{hopfian}, i.e. any epimorphism of $G$ is an automorphism.
\end{theorem}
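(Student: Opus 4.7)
The plan is to show that $\ker(\phi)=\{1\}$ for any non-trivial endomorphism $\phi\colon G\to G$. The key input is the Furter--Lamy theorem recalled in the introduction: every nontrivial element of algebraic length at most $8$ has normal closure equal to $G$. Consequently, if $\ker(\phi)$ contained a nontrivial element of length $\leq 8$, then $\ker(\phi)=G$ and $\phi$ would be trivial, contrary to hypothesis. Since every element of $A\cup B$ has length at most $1$, this already gives the first reduction: both restrictions $\phi|_A$ and $\phi|_B$ are injective.

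Next I would locate the image $\phi(\SL_2(\c))\subset G$, where $\SL_2(\c)\subset A$ is the linear symplectic subgroup. Apply Bass--Serre theory to the action of $G$ on the tree $\mathcal{T}$ attached to the amalgam $G=A*_U B$: since every element of the connected semisimple complex algebraic group $\SL_2(\c)$ is divisible (each sits in a one-parameter subgroup), every element acts elliptically on $\mathcal{T}$, and a theorem of Serre then forces $\phi(\SL_2(\c))$ to fix a vertex of $\mathcal{T}$, hence to be conjugate into $A$ or into $B$. The second option is excluded because $B$ is solvable while $\phi(\SL_2(\c))\cong \SL_2(\c)$ (by injectivity of $\phi|_A$) is not. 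After composing $\phi$ with an inner automorphism of $G$ I may assume $\phi(\SL_2(\c))\subseteq A$. The projection $A\twoheadrightarrow A/\c^2=\SL_2(\c)$ now yields an abstract injective endomorphism of $\SL_2(\c)$, to which I would apply the Borel--Tits classification of abstract homomorphisms of simple algebraic groups; at this stage Theorem \ref{nonexis} can be invoked in the same spirit as its own proof to further rigidify the field endomorphisms of $\c$ that could appear, ruling out proper ones because the resulting copy of $\SL_2$ over a proper subfield would pull back a non-trivial finite-dimensional representation along the amalgamated structure, contradicting Theorem \ref{nonexis}.

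The main obstacle, and the remaining task, is to promote this rigidity from $\SL_2(\c)$ to all of $A$, then to $B$, and finally to verify compatibility on the shared subgroup $U$. On the $A$-side I would extend the normalization to the translation subgroup $\c^2$ using $\SL_2(\c)$-equivariance, concluding that $\phi|_A$ is an inner automorphism of $A$. A parallel analysis applies to the solvable group $B$, exploiting the torus $\c^*\subset U$ and its conjugation action on the ``polynomial'' unipotent part of $B$, to show that $\phi|_B$ is inner. The delicate final step is coordinating the two inner conjugations, obtained independently on the $A$- and $B$-sides, so that they agree on $U=A\cap B$; this compatibility check is the real technical heart of the proof, requiring careful bookkeeping of automorphisms of $A$ and $B$ that preserve $U$, and the finite-dimensional rigidity of $U$ itself. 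Once the conjugations are matched, the universal property of the amalgam $G=A*_U B$ forces $\phi$ to coincide with an inner automorphism of $G$ on the generating set $A\cup B$, hence globally, and $\phi$ is in particular injective. The hopfian conclusion is then immediate: an epimorphism that is simultaneously a monomorphism is an automorphism.
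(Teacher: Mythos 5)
Your opening reduction (Furter--Lamy forces $\phi|_A$ and $\phi|_B$ to be injective) is exactly the paper's first step, but from there your plan diverges onto a route that cannot be completed, because its target is false. You aim to show that $\phi$ agrees with an inner automorphism of $G$ on $A\cup B$ and hence globally; but $G$ admits injective endomorphisms that are not inner and not even surjective, e.g.\ the endomorphism obtained by applying a non-surjective field embedding $\c\hookrightarrow\c$ to all coefficients (this preserves the Jacobian condition). The specific step where you try to exclude such twists --- arguing that a copy of $\SL_2$ over a proper subfield ``would pull back a non-trivial finite-dimensional representation, contradicting Theorem~\ref{nonexis}'' --- does not work: a subgroup of $G$ isomorphic to $\SL_2(k)$ for $k\subsetneq\c$ in no way produces a linear representation of all of $G$, and indeed such subgroups do occur as images under the field-twisted endomorphisms just described. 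A secondary gap: your appeal to Serre's fixed-point theorem to conjugate $\phi(\SL_2(\c))$ into $A$ or $B$ requires finite generation (or at least a bound on the elliptic action), which $\SL_2(\c)$ as an abstract group does not have; the paper gets this conclusion instead from Lamy's classification of elementary subgroups (Theorem~\ref{L}). Finally, the part you call ``the real technical heart'' --- extending rigidity from $\SL_2(\c)$ to $A$, then to $B$, then matching the two conjugations on $U$ --- is left entirely as a plan with no argument.

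The paper's proof is structured quite differently and proves only what is needed, namely injectivity. After conjugating so that $\psi(A)\subset A$, $\psi(B)\subset B$, $\psi(U)\subset U$ (Theorem~\ref{main}, which requires a careful case analysis ruling out $\phi(B)$ being conjugate into $A$ via the subgroup $A\cap\mu A\mu^{-1}$ and the fact that $U$ contains cyclic groups of every finite order), it suffices to show the induced maps $A/U\to A/U$ and $B/U\to B/U$ are injective, since then the image of a reduced word in the amalgam $G=A*_UB$ is again reduced, hence nontrivial. The $B/U$ case is the delicate one and is settled by a commutator computation reducing to $(x+y^2,y)$, forcing its image into the translation subgroup, and then exhibiting an explicit element of algebraic length $8$ in $\ker\psi$, contradicting Theorem~\ref{FL}. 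You should redirect your effort toward proving injectivity of these coset maps rather than innerness of $\phi$.
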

\begin{remark}
In \cite{W}, D.Wright proved that $\Bir(\p^2)$ can be presented  as an amalgamation of three
subgroups $A_i$ ($i=1,2,3$) along pairwise intersections. Moreover, 
$G$ can be embedded into  $\Bir(\p^2)$ via inclusions of $A$ and $B$ into
$A_2$ and $A_3$ respectively (see, \textit{loc.cit}, Theorem~$3.13$ and Theorem~$4.21$).
In light of this, it would be interesting to see the relation between our
 Theorem~\ref{hopf} and D\'eserti's result.
\end{remark}

The paper is organized as follows.
In section $2$ we recall some facts and results needed in later sections. 
In section $3$ we prove Theorem \ref{nonexis} . In section $4$
we prove Theorem \ref{hopf}. 
\section*{acknowledgement}  
We are grateful to Yu.~Berest for interesting suggestions, questions and comments. We also thank  J.~D\'eserti and S.~Lamy for answering our questions and suggestions.
We would like to thank I.~Dolgachev and V.~Lunts for interesting discussions and comments.
 F.E is grateful to Mathematics Department of Indiana University (Bloomington) for their hospitality and support during the period when this work was carried out. 

\section{preliminaries}
\subsection{On subgroups of $G$}

By \cite{FM}, the elements of $ G  $ can be divided into two separate classes
according to their dynamical properties as automorphisms of $ \c^2 $: every
$ g \in G $ is conjugate to
either an element of $B$ or a composition of generalized H\'enon automorphisms of the form:
$$
\sigma\, g \, \sigma^{-1} = g_1 \, g_2\, \ldots \, g_m\ ,
$$
where $\, g_i = (y, x  + q_i(y)) \,$ with polynomials $\, q_i(y) \in \c[y] \,$ of degree $\, \ge  2 \,$. We say  that $ g $ is the \textit{elementary} or \textit{H\'enon} type, respectively. A subgroup $ H \subseteq G $ is called the {\it elementary} if each element of $ H $ is of elementary type.


The following results are proved in \cite[Theorem 2.4, Proposition 4.8]{L}
\begin{theorem}
\la{L}
$(a)$ Let $H$ be an elementary subgroup of $G$.
Then one of the following occurs :\\
$(1)$ $H$ is either conjugate to $A$ or $B$. \\
$(2)$ $H$ is not conjugate to $A$ or $B$. Then $H$ is abelian.\\
$(b)$ Let $g\in G$ be an element of H\'enon type. Then its centralizer is isomorphic
to $\mathbb Z \rtimes \mathbb{Z}/p\mathbb{Z}$. In particular it is countable.
\end{theorem}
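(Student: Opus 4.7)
The natural framework is Bass--Serre theory applied to the amalgamated decomposition $G = A *_U B$ from (\ref{Aut}). Let $T$ be the associated Bass--Serre tree, on which $G$ acts without inversion: the vertex stabilizers are the $G$-conjugates of $A$ and $B$, and the edge stabilizers are the conjugates of $U$. A standard classification of isometries of a tree says every $g \in G$ is either \emph{elliptic} (fixes a vertex) or \emph{hyperbolic} (has a translation axis on which it acts by a nontrivial translation). The Friedland--Milnor description of elements of $G$ recalled above matches this dichotomy: elementary-type elements are elliptic and H\'enon-type elements are hyperbolic with positive translation length. This is how I would read off both parts of the theorem from the geometry of the action.

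For part $(a)$, assume $H \subseteq G$ is elementary, so every $h \in H$ is elliptic on $T$. By Serre's theorem for groups acting on trees with every element elliptic, either $H$ has a global fixed vertex, or else $H$ fixes a unique end $\xi$ of $T$. In the first case $H$ lies in a vertex stabilizer, hence in a conjugate of $A$ or $B$; together with maximality considerations among elementary subgroups this yields outcome~$(1)$. In the second case, choose any ray $v_0, v_1, v_2, \dots$ pointing to $\xi$; then $H$ sits inside the ascending union $\bigcup_n \bigl(\Stab(v_n) \cap \Stab(\xi)\bigr)$, and explicit inspection of $A$, $B$, and $U$ shows that the subgroup of a vertex stabilizer fixing a prescribed end is abelian. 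Since an ascending union of abelian groups is abelian, $H$ is abelian, giving outcome $(2)$.

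For part $(b)$, let $g \in G$ be of H\'enon type with translation axis $L \subset T$. Any $h$ commuting with $g$ satisfies $hgh^{-1}=g$, hence $h$ preserves $L$ setwise and respects the orientation of translation along $L$. Thus $Z(g)$ lies in the orientation-preserving stabilizer $\Stab^{+}(L)$, and the translation length map $\tau: \Stab^{+}(L) \to \mathbb{Z}$ restricts to a homomorphism $\tau : Z(g) \to \mathbb{Z}$. The kernel $K$ pointwise fixes $L$, so it is contained in the intersection of the stabilizers of successive vertices along $L$; by the same end-stabilizer analysis as in $(a)$, this intersection is a finite cyclic group of some order $p$. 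The image $\tau(Z(g))$ is a nonzero subgroup of $\mathbb{Z}$ and hence free; the short exact sequence $1 \to K \to Z(g) \to \tau(Z(g)) \to 1$ therefore splits, and one obtains $Z(g) \cong \mathbb{Z} \rtimes \mathbb{Z}/p\mathbb{Z}$. Countability is then immediate.

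The main obstacle is translating the abstract Bass--Serre picture into the two sharp conclusions. For $(a)$ Case $(2)$ one must rule out non-abelian end-stabilizing subgroups inside the solvable-but-not-abelian Jonqui\`eres group $B$, which requires a concrete computation with triangular maps $(ax+q(y),\,a^{-1}y+f)$ to see which of them can simultaneously fix an entire ray. For $(b)$ the delicate point is pinning down the precise value of $p$: one needs to control the roots of $g$ in $G$, and in particular to show that the only finite-order symmetries commuting with a H\'enon element come from the natural $\mathbb{Z}/p$-symmetry of its (essentially unique) H\'enon factorization. Both computations rely on Lamy's detailed description of fixed point sets on $T$ from \cite{L}, which I would take as the main technical input.
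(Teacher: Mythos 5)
This is not a statement the paper proves: Theorem \ref{L} is quoted directly from Lamy \cite[Theorem 2.4, Proposition 4.8]{L}, so there is no internal proof to compare against. Your Bass--Serre outline is in fact the strategy of the cited source: Lamy studies the action of $G$ on the tree of the amalgam \eqref{Aut}, matches elementary elements with elliptic isometries and H\'enon elements with hyperbolic ones, and derives $(a)$ from the fixed-vertex/fixed-end dichotomy and $(b)$ from the stabilizer of the translation axis. So your approach is the ``same'' one, and your sketch is a reasonable road map; but note that you explicitly defer both decisive computations (which end-stabilizers inside vertex stabilizers are abelian, and why the pointwise stabilizer of an axis meeting the centralizer is finite cyclic) back to \cite{L} --- so the proposal is an outline of the cited proof rather than an independent argument, which is no less than the paper itself provides.

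Two substantive caveats. First, alternative $(1)$ must be read as ``conjugate to a \emph{subgroup} of $A$ or $B$'' (this is how the paper actually uses it in the proof of Theorem \ref{main}); taken literally, the trichotomy is false, since $\SL_2(\c)\subset A$ is an elementary subgroup that is non-abelian and conjugate to neither $A$ nor $B$. Your ``maximality considerations among elementary subgroups'' do not repair the literal statement and should be dropped in favor of the correct reading. Second, in $(b)$ your construction yields the extension $1\to \Z/p\Z \to Z(g)\to \Z\to 1$, in which the \emph{finite} group is the normal factor; this gives a virtually cyclic, hence countable, group (enough for the application), but it is not by itself the asserted presentation $\Z\rtimes\Z/p\Z$. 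Pinning down that structure --- a primitive root of $g$ generating the infinite cyclic part and a finite cyclic group of commuting elliptic symmetries --- is precisely the content of \cite[Proposition 4.8]{L} and cannot be waved through by ``the extension splits.''
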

\begin{remark}
It is easy to see that the centralizer of any automorphism of the elementary type
is uncountable.
\end{remark}

Let $g \in G$ be an element which is not in $U$. Then we say that it has the \textit{ algebraic length} $m$,
if $m$ is the least integer such that $\,g = \sigma_1 \cdots \sigma_m$
and each $g_i$ is either in $A$ or in $B$. We denote $|g|=m$. If
$g\in U$ then we define $|g|=0$. The following is proved in \cite[Theorem 1]{FL}
\begin{theorem}
\la{FL}
If $g \in G$ satisfies $|g| \le 8$ and $g \neq \id$, then the normal
subgroup generated by $g$ is $G$.
\end{theorem}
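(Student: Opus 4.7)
The plan is to exploit the amalgamated-product structure $G = A *_U B$ through the induced action of $G$ on its Bass-Serre tree $T$, and to extract an elementary element of the normal closure $N := \langle\!\langle g\rangle\!\rangle$ by an iterated commutator/length-reduction argument.

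Since $N$ is conjugation-invariant, I would first replace $g$ by a cyclically reduced conjugate, so that $|g|$ equals the translation length of $g$ on $T$ in the H\'enon case, and $g$ fixes a vertex of $T$ in the elementary case. The endgame is to show that $N$ meets $A \setminus \{1\}$ or $B \setminus \{1\}$: once this is established, the structure of $A = \c^2 \rtimes \SL_2(\c)$ (with $\PSL_2(\c)$ simple) and the corresponding solvable structure of $B$ allow one to inflate $N \cap A$ (or $N \cap B$) to all of $A$ (or $B$), and then the amalgamation decomposition, together with Shafarevich's algebraic simplicity of $G$, forces $N = G$.

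To arrive at an elementary element inside $N$, I would use a length-reduction trick. Given $g$ cyclically reduced of length $m \ge 2$, for a generic $h \in A$ the commutator $[h,g] = h g h^{-1} g^{-1}$ has algebraic length at most $2m - 2$ because of cancellation between the final syllable of $h g h^{-1}$ and the first syllable of $g^{-1}$; the length drops still further when the middle syllables also merge into $U$. By choosing $h$ adaptively, one forces a strict decrease in length, and iterating this construction inside $N$ produces a chain of elements whose lengths strictly descend, terminating in a non-trivial elementary element.

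The main obstacle, and the source of the explicit bound $|g| \le 8$, is that this reduction can stall when the syllables of $g$ lie in the normalizer of $U$ in $A$ or $B$, or more generally satisfy special algebraic relations that force extra cancellation in $h g h^{-1}$ in a way that prevents progress. The hard part is an explicit case analysis classifying these obstructing patterns for $m \in \{2, \dots, 8\}$ and verifying that in every such case one can either still select $h \in A \cup B$ so that $|[h,g]| < |g|$, or else replace $g$ by a short product of conjugates of $g^{\pm 1}$ whose length is then reducible. Beyond length $8$ the obstructing configurations in $A$ and $B$ begin to close off all length-reducing commutators simultaneously, which is precisely where Danilov's length-$26$ counterexample lives.
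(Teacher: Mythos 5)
This theorem is not proved in the paper at all: it is quoted verbatim from Furter and Lamy \cite[Theorem 1]{FL}, so there is no internal argument to compare yours against. Judged on its own terms, your proposal is an outline rather than a proof, and its central mechanism does not work as stated. If $g$ is cyclically reduced of length $m=2k\ge 2$ and $h\in A\setminus U$, then $hgh^{-1}$ is a reduced word of length $m+1$ whose last syllable $h^{-1}$ lies in $A$ while the first syllable of $g^{-1}$ lies in $B$ (or vice versa), so no cancellation occurs at the junction and $[h,g]$ generically has length $2m+1$, not $\le 2m-2$. Commuting with factor elements \emph{increases} length; the actual route to producing short elements of the normal closure is to multiply $g$ by carefully chosen conjugates $\sigma g^{\pm1}\sigma^{-1}$ so that long subwords cancel, which is the small-cancellation-over-amalgams viewpoint underlying both Danilov's counterexample and the Furter--Lamy bound. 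You gesture at this as a fallback, but the entire content of the theorem is the ``hard part'' you defer: the explicit classification of obstructing configurations for $m\le 8$, which occupies most of \cite{FL} and cannot be waved through.

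Two further points need repair even in the outline. First, the endgame is incomplete: once $N$ contains a nontrivial element of $A\cup B$ you still must show its normal closure in $G$ is all of $G$, and Shafarevich's simplicity of $G$ as an ind-algebraic group says nothing about abstract normal subgroups (indeed Danilov's example shows $G$ is not abstractly simple), so that appeal is invalid; one has to argue directly, e.g.\ by conjugating a nontrivial translation or element of $\SL_2(\c)$ around inside $A$ and $B$ to generate both factors. Second, the claim that beyond length $8$ ``the obstructing configurations close off all length-reducing commutators'' is not something your argument establishes or needs; the theorem only asserts the bound $8$ is sufficient, and the gap between $8$ and $14$ in \cite{FL} is left open there.
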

\subsection{Divisible groups}
We review some facts and notions about divisible groups.

We recall that an abelian group $H$ is \textit{divisible} if for each $g \in H$ and positive integer $n$ there is an element $h \in H$
with $g= n \, h$ .

Finite abelian groups are not divisible. Among familiar infinite abelian groups,
$\mathbb{Q}, \mathbb{R}, \c, \c[y],  \c^* $  are divisible but $\mathbb{R}^* $ and $\Z$ are not.

The following fact is useful
\begin{lemma}
A quotient of a divisible group is divisible.
\end{lemma}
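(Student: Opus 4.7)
The plan is to argue directly from the definitions, since the statement is essentially tautological once the quotient map is introduced. Let $H$ be a divisible abelian group and let $N \subseteq H$ be a subgroup, so that the quotient $H/N$ is itself abelian; denote the canonical projection by $\pi\colon H \to H/N$.

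To verify divisibility of $H/N$, I would pick an arbitrary element $\bar{g} \in H/N$ and a positive integer $n$, and seek $\bar{h} \in H/N$ with $n\,\bar{h} = \bar{g}$. Choose any lift $g \in H$ with $\pi(g) = \bar{g}$. By divisibility of $H$, there exists $h \in H$ such that $g = n\,h$. Setting $\bar{h} := \pi(h)$, additivity of $\pi$ gives $n\,\bar{h} = n\,\pi(h) = \pi(nh) = \pi(g) = \bar{g}$, which is exactly the divisibility condition for $H/N$.

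Since this argument works for any $\bar{g}$ and any $n \geq 1$, the quotient $H/N$ is divisible. There is no genuine obstacle here: the only point worth noting is that divisibility is a purely existential property preserved by any surjective homomorphism of abelian groups, and the quotient map is the prototypical such surjection.
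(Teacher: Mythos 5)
Your argument is correct and is the standard one: lift, divide in $H$, and push forward along the projection. The paper states this lemma without proof, so there is nothing to compare against; your write-up supplies exactly the routine verification the authors omitted.
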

In particular we have
\begin{corollary}
There are no non-trivial homomorphisms from divisible groups to finite groups.
\end{corollary}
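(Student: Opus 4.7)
The plan is to deduce the Corollary directly from the preceding Lemma by a short two-step argument. Let $\varphi \colon H \to F$ be a group homomorphism, with $H$ divisible (hence abelian, by the convention stated just before the Lemma) and $F$ finite. First I would consider the image $\varphi(H) \subseteq F$. As a homomorphic image of $H$, it is a quotient of $H$, so by the Lemma it is a divisible abelian group; as a subgroup of $F$ it is finite.

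The remaining task is therefore to show that a finite divisible abelian group is trivial. This is the only real content of the argument, and it is immediate: if $K$ is a finite abelian group of order $n$, then $nk = 0$ for every $k \in K$, so the multiplication-by-$n$ map $K \to K$ is the zero map. On the other hand, divisibility of $K$ says that multiplication by $n$ is surjective. Combining these two facts forces $K = \{0\}$.

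Applying this to $K = \varphi(H)$ gives $\varphi(H) = \{e\}$, so $\varphi$ is trivial. The only step that requires any thought is the finite-divisible-implies-trivial observation; everything else is a direct invocation of the Lemma and the hypothesis. I would expect the entire written proof to occupy only two or three lines.
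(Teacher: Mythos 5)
Your proof is correct and follows exactly the route the paper intends: the corollary is stated as an immediate consequence of the lemma that quotients of divisible groups are divisible, combined with the observation (noted in the paper just before the lemma) that nontrivial finite abelian groups are not divisible. Your explicit verification that a finite divisible abelian group must be trivial fills in the only detail the paper leaves unstated.
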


\subsection{Solvable subgroups of $\GL_n (\c)$}
 We recall a classical
characterization of solvable subgroups of $\GL_n (\c) $ due to A.~I.~Maltsev.
Maltsev's theorem is a generalization of the Lie-Kolchin theorem, it gives a description 
of all solvable subgroups of $\GL_n(\c)$
for its proof we refer to \cite[Theorem 3.1.6]{LR}.
\begin{theorem}[Maltsev]
\la{malcev}
Let $\Gamma $ be any solvable subgroup of $\GL_n (\c) $.
Then $ \Gamma $ has a finite index normal subgroup which is
conjugate to a subgroup of upper triangular matrices.
\end{theorem}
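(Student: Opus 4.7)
The plan is to reduce Maltsev's theorem to the Lie--Kolchin theorem via passage to Zariski closures. First I would replace $\Gamma$ by its Zariski closure $H := \overline{\Gamma}$ in $\GL_n(\c)$, which is a linear algebraic group over $\c$. A crucial point is that $H$ remains solvable: the commutator map $(x,y) \mapsto xyx^{-1}y^{-1}$ is a morphism of algebraic varieties, and from this one verifies inductively that $H^{(k)} = \overline{\Gamma^{(k)}}$ for every $k$; in particular $H^{(k)} = \{1\}$ as soon as $\Gamma^{(k)} = \{1\}$.

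Second, I would consider the identity component $H^0 \subseteq H$. Since $H$ is a linear algebraic group, $H^0$ is a closed normal subgroup of finite index, and it is itself connected and solvable. The Lie--Kolchin theorem then supplies some $g \in \GL_n(\c)$ such that $g H^0 g^{-1}$ is contained in the group of upper triangular matrices.

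Finally, setting $\Gamma_0 := \Gamma \cap H^0$, the inclusion $\Gamma \subseteq H$ together with the normality and finite index of $H^0$ in $H$ gives that $\Gamma_0$ is a normal finite-index subgroup of $\Gamma$; and since $\Gamma_0 \subseteq H^0$, the same element $g$ conjugates $\Gamma_0$ into upper triangular form, which is exactly the conclusion of the theorem.

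The step I expect to require the most care is the first one, where solvability must be shown to survive Zariski closure. The key ingredient is the containment $[\overline{K}, \overline{L}] \subseteq \overline{[K,L]}$ for subgroups $K, L \subseteq \GL_n(\c)$, which follows from continuity of the commutator map combined with the fact that the closure of a subgroup is a subgroup; iterating this yields $H^{(k)} \subseteq \overline{\Gamma^{(k)}}$, while the reverse inclusion is automatic. The Lie--Kolchin theorem itself is of course the deepest input, but I would invoke it as an established black box rather than reprove it here.
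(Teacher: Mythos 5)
The paper does not prove this statement: it is quoted as a known theorem with a reference to Lennox--Robinson, so there is no in-paper argument to compare against. Your proposal is correct and is the standard derivation of Maltsev's theorem over $\c$ from Lie--Kolchin: pass to the Zariski closure $H=\overline{\Gamma}$, which is solvable because $[\overline{K},\overline{L}]\subseteq\overline{[K,L]}$; take the identity component $H^{0}$, a closed normal connected solvable subgroup of finite index; triangularize it by Lie--Kolchin; and pull back via $\Gamma_{0}=\Gamma\cap H^{0}$. The only cosmetic slip is the intermediate claim $H^{(k)}=\overline{\Gamma^{(k)}}$: the inclusion $\overline{\Gamma^{(k)}}\subseteq H^{(k)}$ is not quite ``automatic'' (it uses that derived subgroups of algebraic groups are Zariski closed), but it is also not needed --- the containment $H^{(k)}\subseteq\overline{\Gamma^{(k)}}$ alone shows $H$ is solvable, which is all the argument requires.
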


Let $U_n$ be the group of upper triangular matrices with entries $1$ in the diagonal.
One can easily show
\begin{lemma}
\la{nilp}
$U_n$ is a subgroup of $\GL_n (\c)$ of nilpotency class $n-1$.
\end{lemma}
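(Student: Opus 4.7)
The plan is to exhibit a descending central filtration on $U_n$ whose length is exactly $n-1$, and then match it with the lower central series from both directions.

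First, for $1 \le k \le n$, I would define
\[
U_n^{(k)} = \{\, I + N \in U_n : N_{ij} = 0 \text{ whenever } j - i < k \,\},
\]
so that $U_n^{(1)} = U_n$ and $U_n^{(n)} = \{I\}$, and we get a chain
$U_n = U_n^{(1)} \supset U_n^{(2)} \supset \cdots \supset U_n^{(n)} = \{I\}$ of normal subgroups of $U_n$. The key computation is the commutator estimate
\[
[U_n^{(i)},\,U_n^{(j)}] \subseteq U_n^{(i+j)}.
\]
This follows from a direct matrix calculation: if $u = I + N \in U_n^{(i)}$ and $v = I + M \in U_n^{(j)}$, then writing out $uvu^{-1}v^{-1}$ and using that $NM$ and $MN$ both have their first nonzero entries at least $i+j$ superdiagonals above the main diagonal, one sees that $[u,v] - I$ vanishes on the first $i+j-1$ superdiagonals. (Since $N$ is nilpotent, the inverses $u^{-1}, v^{-1}$ are finite geometric series in $N, M$ and fit into the same filtration.)

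From this commutator bound and induction, the lower central series $\gamma_k(U_n)$ satisfies $\gamma_k(U_n) \subseteq U_n^{(k)}$, so in particular $\gamma_n(U_n) = \{I\}$ and the nilpotency class is at most $n-1$.

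For the lower bound I would exhibit a nontrivial iterated commutator of length $n-1$. Take the elementary unipotents $u_i = I + E_{i,i+1}$ for $i = 1,\dots,n-1$. A direct computation using $E_{a,b} E_{c,d} = \delta_{bc}E_{a,d}$ gives
\[
[u_i, u_{i+1}] = I + E_{i,i+2} \pmod{U_n^{(3)}},
\]
and more generally, iterating shows
\[
[u_1,[u_2,[u_3,\dots,[u_{n-2},u_{n-1}]\cdots]]] = I + E_{1,n} \pmod{U_n^{(n)}} = I + E_{1,n}.
\]
Since $I + E_{1,n} \neq I$, this nested commutator is a nontrivial element of $\gamma_{n-1}(U_n)$, so $\gamma_{n-1}(U_n) \neq \{I\}$ and the nilpotency class is exactly $n-1$.

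The main obstacle is the bookkeeping in the second step: verifying cleanly that commutators respect the filtration and that the iterated commutator of the $u_i$'s indeed equals $I + E_{1,n}$ modulo the next filtration stratum. Once the filtration $\{U_n^{(k)}\}$ is set up, however, both estimates reduce to straightforward manipulations with matrix units.
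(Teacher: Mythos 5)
Your proof is correct. The paper does not actually supply an argument for this lemma (it is stated with ``One can easily show''), and what you give is the standard one: the superdiagonal filtration $U_n^{(k)}$ with $[U_n^{(i)},U_n^{(j)}]\subseteq U_n^{(i+j)}$ bounds the class above by $n-1$, and the nested commutator of the elementary unipotents $I+E_{i,i+1}$, which evaluates to $I+E_{1,n}\neq I$ in $\gamma_{n-1}(U_n)$, gives the matching lower bound.
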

\section{On nonexistence of finite-dimensional representations of $G$}
Let $\,\rho : G \to \GL_n (\c)$ be a group homomorphism.
Since $\,G = [G,G]$ we can easily see that $\,\rho (G)\subseteq \SL_n (\c)$.
Now we prove
\begin{proposition}
\la{inj}
If $\rho$ is non-trivial then $\rho|_A$ and $\rho|_B$ must be injective.
\end{proposition}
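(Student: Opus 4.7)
The proposition follows very quickly from the amalgamated product structure of $G$ and Furter--Lamy's normal closure result (Theorem \ref{FL}), and my plan is to reduce everything to that theorem.

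First I would set up the contrapositive. Suppose, say, that $\rho|_A$ fails to be injective. Then there exists some $g \in A$ with $g \ne \id$ but $\rho(g) = \id$. In particular $g$ lies in $\ker \rho$, which is a \emph{normal} subgroup of the full group $G$ (not merely of $A$). Consequently the entire normal closure of $g$ inside $G$ is contained in $\ker\rho$.

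Next I would observe the key numerical fact about algebraic length: any element of $A \setminus U$ has algebraic length exactly $1$, and any element of $U = A \cap B$ has algebraic length $0$, by the very definition of $|g|$ in terms of expressions $g = \sigma_1 \cdots \sigma_m$ with $\sigma_i \in A \cup B$. Thus for every $g \in A$ we have $|g| \leq 1 \leq 8$. Since $g \ne \id$ and $|g| \leq 8$, Theorem \ref{FL} applies and says that the normal closure of $g$ in $G$ is all of $G$. Combined with the previous paragraph this forces $\ker\rho = G$, i.e.\ $\rho$ is trivial, contradicting the hypothesis.

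The argument for $\rho|_B$ is formally identical, since elements of $B$ also satisfy $|g| \leq 1$. The only thing to be careful about is not to confuse normality in $A$ (or $B$) with normality in $G$: the statement we need is that $\ker\rho$ is normal in $G$, which is automatic because $\ker\rho$ is the kernel of a homomorphism defined on all of $G$. There is no real obstacle here; the whole content of the proposition is packaged into Theorem \ref{FL}, and the role of this proposition is simply to exploit that sharp bound on algebraic length to eliminate the possibility of a nonzero kernel on the two amalgamation factors.
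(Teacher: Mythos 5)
Your proof is correct and follows exactly the same route as the paper: a nontrivial element of $A$ or $B$ in $\ker\rho$ has algebraic length at most $1$, so Theorem~\ref{FL} forces its normal closure, hence $\ker\rho$, to be all of $G$. You simply spell out the details (normality of $\ker\rho$ in $G$, the length bound) that the paper leaves implicit.
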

\begin{proof}
Suppose the kernel of $\rho$ contains $g$, a non-trivial element of $A$ or $B$.
Since $g$ is of length at most one, by Theorem \ref{FL} the normal
closure of any such element is equal to $G$. This implies that $\rho$ must be trivial.
\end{proof}
Since $B$ is a solvable group, $\rho(B)$ is a solvable subgroup of $\SL_n (\c)$. The following lemma
gives more precise description of $\rho(B)$:
\begin{lemma}
\la{eig}
All eigenvalues of $\rho (x + \lambda y^k, y)$ and $\rho(x, y + \mu x^k)$ are $1$ for all
$\lambda \, , \mu \in \c$ and $k \geq 0$.
\end{lemma}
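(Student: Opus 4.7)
\medskip

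\noindent\textbf{Proof proposal.} The plan is to reduce the statement for the elements $(x,y+\mu x^k)$ to the statement for the elements $(x+\lambda y^k, y)$ by an explicit conjugation, and then to prove the latter by combining a torus-conjugation argument (which forces the eigenvalues to take only finitely many values) with the divisibility of $(\c,+)$.

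\medskip

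First I would dispose of the reduction. The involution $\sigma = (y,-x)$ lies in $A$, and a direct computation gives $\sigma\,(x+\lambda y^k,y)\,\sigma^{-1}=(x,\,y-\lambda x^k)$. Hence $\rho(x,y+\mu x^k)$ is conjugate in $\GL_n(\c)$ to $\rho(x+(-\mu)y^k,y)$, so the two eigenvalue claims are equivalent.

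\medskip

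For the first claim, fix $k\ge 0$ and consider the subgroup $H_k=\{(x+\lambda y^k,y):\lambda\in\c\}\subset B$; it is isomorphic to $(\c,+)$, hence abelian. Its image $\rho(H_k)$ is a commuting family in $\SL_n(\c)$, so simultaneously triangularizable: after a change of basis, $\rho(x+\lambda y^k,y)$ is upper triangular with diagonal entries $\chi_1(\lambda),\ldots,\chi_n(\lambda)$, where each $\chi_i:(\c,+)\to\c^\ast$ is a group homomorphism. Next I would exploit the torus $t_\alpha=(\alpha x,\alpha^{-1}y)\in A$, for which a straightforward calculation yields $t_\alpha\,(x+\lambda y^k,y)\,t_\alpha^{-1}=(x+\lambda\alpha^{k+1}y^k,y)$. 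Since conjugation preserves the multiset of eigenvalues, and since $\alpha^{k+1}$ ranges over all of $\c^\ast$ as $\alpha$ does, the multiset $\{\chi_1(\lambda),\ldots,\chi_n(\lambda)\}$ is independent of $\lambda\in\c^\ast$. In particular each $\chi_i$ takes values in the finite set $F:=\{\chi_1(1),\ldots,\chi_n(1)\}$ on $\c^\ast$, and also $\chi_i(0)=1$, so $\chi_i(\c)\subseteq F\cup\{1\}$.

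\medskip

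The main (and only) subtlety is now an algebraic observation: $\chi_i(\c)$ is a subgroup of $\c^\ast$ contained in the finite set $F\cup\{1\}$, so it is a finite subgroup of $\c^\ast$; being a homomorphic image of the divisible group $(\c,+)$, it is also divisible; but a finite divisible abelian group is trivial (the Corollary in Section~2.2 applied to this finite group itself, or directly: in a finite group of order $N>1$, no nonzero element is $N$-divisible). Hence each $\chi_i$ is trivial, $\rho(x+\lambda y^k,y)$ is unipotent, and all its eigenvalues equal $1$. Combined with the reduction via $\sigma$, this proves the lemma. The core difficulty lies precisely in extracting the finiteness of the image from the torus-conjugation identity $t_\alpha\cdot(x+\lambda y^k,y)\cdot t_\alpha^{-1}=(x+\lambda\alpha^{k+1}y^k,y)$; once that is in hand, divisibility does the rest.
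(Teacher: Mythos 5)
Your proof is correct, and its engine is the same as the paper's: the conjugation $t_\alpha\,(x+\lambda y^k,y)\,t_\alpha^{-1}=(x+\lambda\alpha^{k+1}y^k,y)$ by the diagonal torus in $A$, which shows the eigenvalue multiset of $\rho(x+\lambda y^k,y)$ is insensitive to rescaling $\lambda$. Where you diverge is in how you convert that invariance into unipotence. The paper specializes to $\alpha^{k+1}=m\in\mathbb{Z}$, uses that $(x+m\lambda y^k,y)=(x+\lambda y^k,y)^m$ to conclude the eigenvalue set is stable under $a\mapsto a^m$ for every $m\ge 1$, and deduces first that each eigenvalue is a root of unity and then (taking $m$ the product of the orders) that all equal $1$. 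You instead use all of $\c^\ast$ to see the multiset is constant for $\lambda\neq 0$, simultaneously triangularize the abelian image of $H_k\cong(\c,+)$ so the eigenvalues become characters $\chi_i$, and kill each $\chi_i$ because its image is a finite divisible group. Both finishes are sound; yours leans on the divisibility machinery the paper sets up in Section~2.2 (and reuses in Proposition~\ref{u_n}), while the paper's power trick is more self-contained and avoids the triangularization step. You also make explicit the reduction of $\rho(x,y+\mu x^k)$ to the first family via conjugation by $(y,-x)\in A$, which the paper dismisses as ``analogous''; that computation is correct.
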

\begin{proof}
We set $\, A_{k,\lambda}:=\rho(x + \lambda y^k, y)$ and $\, B_{k,\mu}:=\rho(x, y + \mu x^k)$.
We will prove the lemma for $\, A_{k,\lambda}$ since a proof of $\, B_{k,\mu}$ is analogous.
We have
$$
 (\nu^{-1} x, \nu y) \, \circ \,(x + \lambda y^k, y) \, \circ \, ( \nu x, \nu^{-1} y) \, =
 \, (x+\lambda \nu^{k+1} y^k, y) \, .
$$
for some $\nu \in \c^*$. This means a matrix $A_{k,\lambda}$ is similar to $A_{k,\lambda \nu^{k+1}}$. In particular  for $\nu^{k+1}\in \mathbb{Z} $ we obtain that  $A_{k,\lambda}$ is similar to any of its power
$A_{k, m \lambda} = A^m _{k,  \lambda}$. If $\,\{a_1, \ldots , a_n\,\}$ is the set of
eigenvalues of $A_{k,\lambda}$, then this set equal to the set $\,\{a_1^m, \ldots , a_n^m\,\}$
for any $\,m\ge 1$. This implies that $\,a_1^{m_1}=1, \ldots , a_n^{m_n}=1$ for some positive
$m_1, m_2, \ldots , m_n$. Finally, choosing $\,m=m_1 \, m_2 \, \ldots \,  m_n\,$ we have
$\,\{a_1^m= \ldots = a_n^m=1\,\}$. Hence $a_1 = a_2 = \ldots = a_n =1$.
\end{proof}
Consider the unitriangular subgroup $B_0 \subseteq B$ consisting of elements
$$
(x+ p(y) , \, y + f) \, .
$$
Then we have
\begin{proposition}
\la{u_n}
$\rho (B_0)$ is conjugate to a subgroup of $U_n$.
\end{proposition}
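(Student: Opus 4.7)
The strategy is to combine three ingredients: the divisibility of $B_0$, Maltsev's Theorem \ref{malcev}, and the eigenvalue information from Lemma \ref{eig}. The key observation is that $B_0$ is \emph{divisible} as a group (every element admits an $n$-th root for each $n \ge 1$). Indeed, an induction on $n$ using the composition law in $B_0$ gives
\[
(x + p(y), y + f)^n = \Bigl( x + \sum_{k=0}^{n-1} p(y + kf),\; y + nf \Bigr),
\]
so finding an $n$-th root of $(x + P(y), y + F)$ amounts to taking $f = F/n$ and solving $\sum_{k=0}^{n-1} p(y + kF/n) = P(y)$ for $p \in \c[y]$. The linear operator $T \colon p \mapsto \sum_{k=0}^{n-1} p(y + kF/n)$ on $\c[y]$ is upper triangular in the monomial basis with every diagonal entry equal to $n$, so $T$ is invertible and the equation always has a solution.

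Because divisibility in this sense is preserved under homomorphic images, $\rho(B_0)$ is also divisible. Since $B_0$ is solvable, $\rho(B_0) \subseteq \GL_n(\c)$ is solvable, and Maltsev's theorem supplies a finite index normal subgroup $N \trianglelefteq \rho(B_0)$ conjugate to a subgroup of upper triangular matrices. The quotient $\rho(B_0)/N$ is finite and divisible; but any finite divisible group $H$ is trivial, since for $g \in H$ one can write $g = h^{|H|} = e$. Hence $\rho(B_0) = N$, and after a global conjugation we may assume $\rho(B_0)$ consists entirely of upper triangular matrices.

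It remains to show that the diagonal entries of every $\rho(b)$, $b \in B_0$, are equal to $1$. The assignment $\rho(b) \mapsto (\text{diagonal entries})$ is a group homomorphism $\rho(B_0) \to (\c^*)^n$, so it suffices to check this on a generating set of $B_0$. Now $B_0$ is generated by the elements $(x + \lambda y^k, y)$ ($k \ge 0$, $\lambda \in \c$) together with the translations $(x, y + f)$ ($f \in \c$), because any $(x + p(y), y + f) \in B_0$ factors as $(x, y+f) \circ \prod_k (x + \lambda_k y^k, y)$ where $p(y) = \sum_k \lambda_k y^k$. By Lemma \ref{eig}, all such generators have all eigenvalues equal to $1$ (the translation $(x, y + f)$ being the case $k = 0$, $\mu = f$ of the second family); hence their diagonal entries in the upper triangular form are all $1$. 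This proves $\rho(B_0) \subseteq U_n$.

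The principal obstacle is the divisibility step: one must verify both the explicit $n$-th root formula in $B_0$ and that divisibility collapses the Maltsev finite index subgroup down to the full group. Once this is in hand, the remaining assembly via Maltsev's theorem and Lemma \ref{eig} is routine.
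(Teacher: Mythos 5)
Your proof is correct, and it follows the same overall skeleton as the paper's: Maltsev's Theorem \ref{malcev} produces a triangularizable normal subgroup of finite index, divisibility forces the finite quotient to be trivial, and Lemma \ref{eig} upgrades ``triangular'' to ``unitriangular.'' The one step you handle genuinely differently is the divisibility argument. The paper never claims that the nonabelian group $B_0$ is divisible; instead it runs a two-step d\'evissage along the derived series, noting that $[B_0,B_0]\cong \c[y]$ and $B_0/[B_0,B_0]\cong\c$ are divisible \emph{abelian} groups, so the composite $B_0\to\rho(B_0)/T$ first kills $[B_0,B_0]$ and then factors through $\c$, whose image in a finite group is trivial. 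You instead prove directly that every element of $B_0$ admits an $n$-th root, via the explicit power formula and the invertibility of the averaging operator $p\mapsto\sum_{k=0}^{n-1}p(y+kF/n)$, and then observe that root-extractability passes to quotients and that a finite group with this property is trivial (Lagrange). Both routes are sound; the paper's avoids any computation by leaning on the standard corollary about divisible abelian groups, while yours establishes the stronger (and not entirely obvious) fact that $B_0$ itself is divisible as a nonabelian group. Your final paragraph is also somewhat more careful than the paper's one-line appeal to Lemma \ref{eig}: you make explicit that the diagonal map $\rho(B_0)\to(\c^*)^n$ is a homomorphism and that the lemma covers a generating set of $B_0$, including the translations $(x,y+f)$ as the case $k=0$ of the second family. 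This fills in a detail the paper leaves implicit.
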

\begin{proof}
First we note $B_0$ is a solvable subgroup $B$ and $\rho(B_0)$ is a solvable subgroup of $\SL_n (\c)$.
Therefore by Theorem \ref{malcev} it has a normal triangularizable subgroup $T$ which has a finite index in $\rho(B_0)$.
In other words $\rho(B_0) / T$ is a finite group. A surjective homomorphism $B_0 \rightarrow \rho(B_0) / T$
 induces a homomorphism $[B_0, B_0] \rightarrow \rho(B_0) / T$. Since a group $[B_0, B_0] \cong \c[y]$ is divisible, 
 $[B_0, B_0] \rightarrow \rho(B_0) / T$ must be trivial.
Therefore we have a surjective homomorphism $B_0 / [B_0,B_0] \rightarrow \rho(B_0) / T$. However 
$B_0 / [B_0,B_0] \cong \c$ is divisible therefore
a group $\rho(B_0) / T$ must be trivial. Hence $\rho(B_0)$ is conjugate to a subgroup of upper triangular matrices. 
Now by Lemma \ref{eig} $\rho(B_0)$ is also unipotent.
\end{proof}
On the other hand
\begin{proposition}
\la{nonilp}
 $B_0$ is not a nilpotent group.
\end{proposition}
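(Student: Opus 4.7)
The plan is to identify $B_0$ with the semidirect product $\c[y] \rtimes \c$, where $\c$ acts on $\c[y]$ by $y$-translation, and then to exhibit, for every $k \ge 1$, an iterated commutator of length $k+1$ which is non-trivial in $B_0$. Since any nilpotent group has a fixed finite nilpotency class, the existence of such commutators for arbitrarily large $k$ will force $B_0$ to fail to be nilpotent.

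First I would write $a_p := (x+p(y),\,y)$ for $p \in \c[y]$ and $b_f := (x,\,y+f)$ for $f \in \c$, and verify by a direct composition of polynomial automorphisms the key commutator identity
\[
[a_p,\,b_f]\ =\ a_{p(y)-p(y-f)}\, .
\]
In other words, commuting an element of the abelian normal subgroup $N := \{a_p : p \in \c[y]\}$ with the $y$-shift $b_f$ amounts to applying the finite-difference operator $\Delta_f q(y) := q(y) - q(y-f)$ to the polynomial label.

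Next, I would fix $k \ge 1$ and take $a := a_{y^k}$ together with $b := b_1$. Iterating the commutator identity $k$ times gives that the $k$-fold nested commutator
\[
\bigl[\,\cdots\bigl[[a,b],b\bigr],\,\cdots,\,b\,\bigr]\ \in\ \gamma_{k+1}(B_0)
\]
(with $k$ copies of $b$) equals $a_{\Delta_1^k y^k}$. Since the $k$-th finite difference of $y^k$ is the non-zero constant $k!$, this element is $a_{k!} = (x + k!,\, y)$, which is non-trivial in $B_0$. Hence $\gamma_{k+1}(B_0)\neq \{e\}$ for every $k\ge 1$, the lower central series of $B_0$ never reaches the identity, and $B_0$ is therefore not nilpotent.

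There is no serious obstacle: the only computation requiring care is the composition unpacking that yields $[a_p, b_f] = a_{\Delta_f p}$, after which everything reduces to the elementary identity $\Delta_1^k y^k = k!$ for monomials.
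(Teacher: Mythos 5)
Your proof is correct and follows essentially the same route as the paper: both arguments reduce to the observation that commutation with the $y$-shift acts on the normal subgroup $\{(x+p(y),y)\}$ as a finite-difference operator on $p$, so the lower central series of $B_0\cong\c[y]\rtimes\c$ never reaches the identity. The paper packages this by showing $[B_0,[B_0,B_0]]=[B_0,B_0]\neq 1$ (the series stabilizes at a non-trivial term), whereas you exhibit an explicit non-trivial witness $a_{k!}\in\gamma_{k+1}(B_0)$ for each $k$; the difference is cosmetic.
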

\begin{proof}
One can compute that  the group $B^{(1)}_0 = \, [B_0, B_0 ]$ consists of elements
$$
(x+ p(y), y) \quad \mbox{for all} \quad p(y) \in \c[y]
$$
On the other hand
$$
B^{(2)}_0 = [B_0 , B^{(1)}_0 ] = B^{(1)}_0
$$
So it stabilizes $ 1 \neq B^{(1)}_0 = B^{(2)}_0 = \ldots $. Hence it is not nilpotent.
\end{proof}
\begin{proof}[Proof of Theorem \ref{nonexis}]
Suppose there is a non-trivial homomorphism $\rho : \, G \rightarrow \GL_n (\c)$. By  Proposition \ref{inj} its restriction to $B_0$ must be injective. From Proposition \ref{u_n}
it follows that $\rho (B_0)$ can be conjugated to a subgroup of $U_n$ and hence is nilpotent. This  contradicts to Proposition \ref{nonilp}.
\end{proof}

There are some interesting consequences of this result which are of independent interest.
Let $\mathtt{Cr}(n)$  be the Cremona group of birational automorphisms of $\p^n$.
Then the above result implies
\begin{corollary}
$(a)$ There is no non-trivial finite dimensional representation of $\mathtt{Cr}(2)$. \\
$(b)$ $\Aut(\c^n)$  and $\mathtt{Cr}(n)$ are not linear, i.e., these groups have
no faithful representations in $\GL_n(\c)$.
\end{corollary}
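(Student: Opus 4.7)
The plan is to deduce both assertions from Theorem~\ref{nonexis} by viewing $G$ as a subgroup of each of the ambient groups in question.

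For part~(b), I would note that for every $n \ge 2$ the inclusion $\Aut(\c^2) \into \Aut(\c^n)$, obtained by letting an automorphism act on the first two coordinates while fixing the remaining ones, realizes $G$ as a subgroup of $\Aut(\c^n)$; composing with the standard inclusion $\Aut(\c^n) \into \mathtt{Cr}(n)$ also realizes $G$ inside $\mathtt{Cr}(n)$. Thus if $\rho$ is any finite-dimensional representation of $\Aut(\c^n)$ or $\mathtt{Cr}(n)$, its restriction $\rho|_G$ is a finite-dimensional representation of $G$ and hence trivial by Theorem~\ref{nonexis}. Since $G$ is non-trivial (in fact uncountable), this already forces $\ker \rho \ne 1$, so $\rho$ cannot be faithful.

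For part~(a), the same reasoning gives $G \subseteq \ker \rho$ for any $\rho : \mathtt{Cr}(2) \to \GL_n(\c)$, and the task is to enlarge the kernel to all of $\mathtt{Cr}(2)$. First I would observe that $\SL_2(\c) \subset A \subset G$ sits inside $\PGL_3(\c) \subset \mathtt{Cr}(2)$ as the upper-left $2 \times 2$ block (the affine linear transformations of $\c^2 \subset \p^2$), so $\SL_2(\c) \subseteq \ker \rho$. Since $\PGL_3(\c)$ is abstractly simple, the normal closure of this non-trivial subgroup is all of $\PGL_3(\c)$, and therefore $\PGL_3(\c) \subseteq \ker \rho$. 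By the Noether--Castelnuovo theorem $\mathtt{Cr}(2) = \langle \PGL_3(\c), \sigma_0 \rangle$, where $\sigma_0$ is the standard quadratic involution, so $\rho$ factors through the cyclic quotient $\mathtt{Cr}(2) / \langle\langle \PGL_3(\c) \rangle\rangle$, which has order dividing $2$ since $\sigma_0^2 = 1$.

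The main obstacle is the final step: showing that $\sigma_0$ actually lies in the normal closure of $\PGL_3(\c)$, so that the quotient collapses and $\rho$ is globally trivial. I would try to achieve this by exhibiting an explicit word expressing $\sigma_0$ as a product of $\mathtt{Cr}(2)$-conjugates of $\PGL_3(\c)$-elements, via a Gizatullin--Iskovskikh type presentation of $\mathtt{Cr}(2)$; equivalently, it would suffice to exhibit a relation of the form $(h \sigma_0)^k = 1$ with $h \in \PGL_3(\c)$ and $k$ odd, which combined with $\sigma_0^2 = 1$ immediately forces $\rho(\sigma_0) = 1$.
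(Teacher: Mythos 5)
Your argument is essentially the paper's: both parts are deduced by restricting a putative representation $\rho$ to the copy of $G$ inside the ambient group and invoking Theorem~\ref{nonexis}, with part (a) resting on the observation that $\SL_2(\c)\subset G$ also lies in $\PGL_3(\c)\subset \mathtt{Cr}(2)$ --- the paper's one-line proof is exactly your chain $\SL_2(\c)\subseteq\Ker\rho\Rightarrow\PGL_3(\c)\subseteq\Ker\rho\Rightarrow\rho$ trivial, left implicit. The single step you flag as open, namely that $\sigma_0$ lies in the normal closure of $\PGL_3(\c)$, is closed by precisely the relation you ask for: taking $h(x,y)=(1-x,1-y)$, i.e.\ $h=[z-x:z-y:z]\in\PGL_3(\c)$, the map $h\sigma_0$ acts on each coordinate by the M\"obius transformation $u\mapsto (u-1)/u$, which has order $3$, so $(h\sigma_0)^3=\id$ and hence $\rho(\sigma_0)^3=\rho(\sigma_0)^2=1$ forces $\rho(\sigma_0)=1$.
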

\begin{proof}
$(a)$ Follows from the fact that the subgroup $\SL_2(\c)$ in $G$ is also a subgroup of $\PGL_3(\c)$
in $\mathtt{Cr}(2)$.\\
$(b)$ It follows immediately from the fact that $G$ is a subgroup of both $\Aut(\c^n)$ and $\mathtt{Cr}(n)$.
\end{proof}
Results of this corollary for $\mathtt{Cr}(n)$  were proved earlier by D.Cerveau and J.D\'eserti \cite{CD}.
\section{Endomorphisms of the group $G$}
For $g \in G$ we denote by $\Ad_g$  the inner automorphism of $G$
given by $g \, ( \cdot ) \, g^{-1}$. To prove our theorem it
suffices to show: given a non-trivial $\phi : G \rightarrow G$ homomorphism
there are $g , \, h  \in G$ such that composition $\Ad_g \,\circ\, \phi
\, \circ\,\Ad_h$ is a monomorphism. First we will show that any
non-trivial endomorphism of $G$ can be composed by an inner
automorphism to give an endomorphism which induces injective
endomorphisms of its subgroups $A$ and $B$, namely $\phi(A) \subset
A$ and $\phi(B) \subset B$. Following \cite{FL} one can define
systems of representatives of the non-trivial left cosets $A/U$ and
$B/U$ by 
$$
 I = \{ \, (\lambda x + y, -x) \, , \, \lambda \in \c \, \}
$$ 
$$
J= \{ \,  (x+ p(y),y) \, , \, p(y) \in y^2 \c[y] \backslash \{0\} \, \}
$$
respectively. We can prove
\begin{proposition}
\la{auxi}
 Let $\mu = (x+p(y),y)$ such that $\deg(p)=n \geq 2$. Then $A \cap \mu A \mu^{-1}$
is a subgroup of $H$ defined as
\begin{equation}\label{eq2}
 H= \{(x+by+e, y) \, | \, b, \, e \in \c \} \rtimes \Z_{n+1}  
\end{equation}
where $\Z_{n+1}$ is the cyclic subgroup of $(\lambda x, \lambda^{-1} y)\,
, \,  \lambda \in \c^*$.
\end{proposition}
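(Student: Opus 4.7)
The plan is to start with a generic element $\alpha \in A$, write it as $\alpha(x,y) = (ax+by+e,\,cx+dy+f)$ with $ad-bc=1$, impose the condition $\mu^{-1}\alpha\mu \in A$ (equivalent to $\alpha \in \mu A \mu^{-1}$), and read off the shape of $\alpha$ from the resulting polynomial identities.

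First I would compute $\mu^{-1}\alpha\mu$ explicitly using $\mu(x,y)=(x+p(y),y)$ and $\mu^{-1}(x,y)=(x-p(y),y)$. Direct substitution shows that the second coordinate of $\mu^{-1}\alpha\mu$ equals $cx + c\,p(y) + dy + f$. For this to be affine in $(x,y)$, the term $c\,p(y)$ must be affine, and since $\deg p = n \geq 2$, this forces $c = 0$ and hence $d = a^{-1}$. Thus $\alpha$ already belongs to $U = A \cap B$ and has the form $\alpha(x,y) = (ax + by + e,\ a^{-1}y + f)$. With $c = 0$, the first coordinate of $\mu^{-1}\alpha\mu$ reduces to $ax + by + e + \varphi(y)$, where
\[
\varphi(y) \;:=\; a\,p(y) - p(a^{-1}y + f),
\]
and the membership condition becomes $\deg \varphi \le 1$.

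Reading off the coefficient of $y^n$ in $\varphi$ gives $p_n(a - a^{-n}) = 0$, so $a^{n+1} = 1$; this is exactly the cyclic group $\Z_{n+1}$ in the statement. I would then work down through the coefficients of $y^{n-1}, y^{n-2}, \ldots, y^2$: expanding $p(a^{-1}y + f)$ via the binomial theorem produces a triangular system of polynomial equations in the single unknown $f$ (with the upper equation linear in $f$), and one deduces $f = 0$. At that stage $\alpha = (ax + by + e,\ a^{-1}y)$, which factors as
\[
\alpha \;=\; \bigl(x + (b/a)y + e/a,\ y\bigr) \,\circ\, (ax,\ a^{-1}y),
\]
exhibiting $\alpha$ as a product of an element of $\{(x + b'y + e', y) : b', e' \in \c\}$ with an element of $\Z_{n+1}$, hence $\alpha \in H$.

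The main technical hurdle I expect is the step that forces $f = 0$: the equations coming from the coefficients of $y^{n-1}, \ldots, y^2$ in $\varphi$ couple $f$ with the various $p_k$ and with powers of $a$, and one has to use the identity $a^{n+1} = 1$ together with $p_n \neq 0$ to propagate vanishing downward and rule out nonzero solutions. Treating the model case $p(y) = y^n$ first (where the coefficient of $y^{n-1}$ in $\varphi$ is $n p_n a^{-(n-1)} f$, visibly forcing $f=0$) provides the template for the general polynomial $p$.
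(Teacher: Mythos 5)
Your overall strategy is the same as the paper's: conjugate a general affine map by $\mu$ and read off constraints from the requirement that the result again be affine. Your derivation of $c=0$ directly from the second coordinate is a minor (and clean) variation --- the paper instead disposes of $g\in A\setminus U$ by noting that $\mu g\mu^{-1}$ is then a reduced word of length $3$ in the amalgam --- and extracting $a^{n+1}=1$ from the top coefficient of $\varphi$ is exactly the paper's argument.

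The gap is the step where you claim the lower coefficients force $f=0$; they do not. The membership condition is only $\deg\varphi\le 1$, so the coefficients of $y^{1}$ and $y^{0}$ in $\varphi$ are unconstrained. For $n=2$ the single equation (coefficient of $y^{2}$) reads $a^{3}=1$ and does not involve $f$ at all: for $\mu=(x+y^{2},y)$ and $g=(x,y+1)$ one gets $\mu g\mu^{-1}=(x+2y+1,\,y+1)$, an element of $A\cap\mu A\mu^{-1}$ with a nontrivial translation in $y$, hence not in $H$ as defined. For $n\ge 3$ the coefficient of $y^{n-1}$ gives $f=p_{n-1}(a^{-1}-1)/(np_{n})$, which vanishes only when $p_{n-1}=0$ or $a=1$; your model case $p=y^{n}$ hides exactly this (try $p=y^{3}+y^{2}$, $a=i$). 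So $f=0$ cannot be deduced, and in fact the statement as printed is slightly too strong; the paper's own proof passes over this point with ``this immediately implies the statement.'' The provable conclusion is that $A\cap\mu A\mu^{-1}$ is contained in the larger group $\{(x+by+e,\,y+f)\mid b,e,f\in\c\}\rtimes\Z_{n+1}$. Since that group is still an extension of $\Z_{n+1}$ by a torsion-free group, the only property used later (it contains no cyclic subgroup of order exceeding $n+1$, so cannot contain an isomorphic image of $U$) is unaffected. You should either prove this weakened containment or flag that the $y$-translation cannot be eliminated.
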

\begin{proof}
We will consider two cases: $g\in A \backslash U$ and $g \in U$. In the first case
$\mu \, g \, \mu^{-1}$ is a word of length $3$ so it can not be in $A$. If $g \in U$
we have
$$
\mu \, g \, \mu^{-1} = \, (\lambda x + \lambda p(y)-p(\lambda^{-1}y+f)+by+e,\, \lambda^{-1}y+f)
$$
where $g=(\lambda x +by+e,\, \lambda^{-1}y+f)$. The element $\mu \, g \, \mu^{-1} $
belongs to $A$ if and only if $\deg(\lambda p(y)-p(\lambda^{-1}y+f)) \leq 1$ which can only happen
if $\lambda^{n+1}=1$. This immeadiately imply the statement.
\end{proof}
\begin{proposition}
\la{int}
Let $ \phi: G \to G$ be a non-trivial group homomorphism. Then
\begin{enumerate}
\item[(a)] Restrictions of $\phi$ to $A$ and $B$ are group monomorphisms.
\item[(b)] $\phi(A) \cap \phi(B) =\phi(U)$.
\end{enumerate}
\end{proposition}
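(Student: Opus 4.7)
The plan is to deduce both parts directly from Theorem~\ref{FL}, exploiting the fact that any element of $A \cup B$ has algebraic length at most $1$, and that a product $ab^{-1}$ with $a \in A \setminus U$ and $b \in B \setminus U$ is a reduced word of length exactly $2$ in the amalgamated product $G = A *_U B$---comfortably below the bound $|g| \le 8$ allowed by Furter--Lamy. In this way both statements become essentially corollaries of the normal-closure result already used to prove Proposition~\ref{inj}.

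For part (a), I would mirror the argument of Proposition~\ref{inj} verbatim: if some $g \in A \cup B$ with $g \neq \id$ satisfies $\phi(g)=\id$, then $|g| \le 1$, so Theorem~\ref{FL} forces the normal closure of $g$ in $G$ to be all of $G$; hence $\ker \phi = G$, contradicting the nontriviality of $\phi$. Therefore $\phi|_A$ and $\phi|_B$ are monomorphisms.

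For part (b), the inclusion $\phi(U) \subseteq \phi(A) \cap \phi(B)$ is immediate from $U = A \cap B$. For the opposite inclusion, I take $x = \phi(a) = \phi(b)$ with $a \in A$, $b \in B$, and split into cases. If $a = b$, then $a \in A \cap B = U$ and so $x \in \phi(U)$. Otherwise $ab^{-1}$ is a nontrivial element of $\ker \phi$. Here part (a) rules out the ``mixed'' possibilities: if $a$ lay in $U$, then $a, b$ would both lie in $B$ with $\phi(a) = \phi(b)$, forcing $a = b$ by injectivity of $\phi|_B$; symmetrically $b \notin U$. Hence $a \in A \setminus U$ and $b \in B \setminus U$, which makes $ab^{-1}$ a reduced word of length exactly $2$ in $A *_U B$. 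Applying Theorem~\ref{FL} to $ab^{-1}$ then gives $\ker \phi = G$, again contradicting the nontriviality of $\phi$. Thus only the case $a = b \in U$ occurs, and $x \in \phi(U)$.

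There is no real mathematical obstacle: everything reduces to invoking Theorem~\ref{FL} on an element of very small algebraic length. The only care needed is in the case analysis of (b), where part~(a) is used twice to eliminate the possibility that exactly one of $a,b$ lies in $U$ before the length-$2$ reduction can be applied.
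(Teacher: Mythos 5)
Your proof is correct and follows essentially the same route as the paper: both parts reduce to applying Theorem~\ref{FL} to a kernel element of algebraic length at most $1$ (for (a)) or at most $2$ (for (b)). The only difference is cosmetic: your case analysis in (b) showing $a \in A\setminus U$, $b \in B\setminus U$ is not actually needed, since $a\neq b$ already gives a nontrivial kernel element $ab^{-1}$ of length $\le 2$, which is all Theorem~\ref{FL} requires.
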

\begin{proof}\mbox{}
\begin{enumerate}
\item[(a)]
Let $a \in A \cup B$ be an element $a\neq 1 $ such that  $\phi(a)=1$.
Then by Theorem~\ref{FL} we have $\phi(G)=1$, which is impossible.

\item[(b)] It is clear that $\phi(U) \subset \phi(A) \cap \phi(B)$.
Now if $\phi(a)=\phi(b)$ for some $a\in A$ and $b\in B$ then $\phi(ab^{-1})=1$.
Again by Theorem~\ref{FL} implies that $\phi$ is injective on words of length 2.
Hence $a=b \in U$.
\end{enumerate}
\end{proof}
\begin{theorem}
\la{main}
Let $\, \phi: G \to G$ be a non-trivial group homomorphism.
Then composing $\phi$ by proper inner automorphisms of $G$, we obtain a homomorphism
$\tilde \psi$ such that
$$ \tilde \psi(A) \, \subset \, A \, , \,  \tilde \psi(B) \, \subset \, B \, ,\,
\tilde \psi(U) \, \subset \, U\,.$$
Moreover $\psi$ restricted to  $A, B$ and $U$ gives injective endomorphisms.
\end{theorem}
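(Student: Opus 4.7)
The proof rests on Proposition~\ref{int}: we already have that $\phi|_A$ and $\phi|_B$ are injective and that $\phi(A)\cap\phi(B)=\phi(U)$. So it suffices to produce inner automorphisms whose composition with $\phi$ sends $A$ into $A$ and $B$ into $B$; the inclusion $\tilde\psi(U)\subset U$ is then automatic from Proposition~\ref{int}(b), and injectivity of the three restrictions follows from Proposition~\ref{int}(a).

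The plan is first to show that $\phi(A)$ sits inside some $G$-conjugate of $A$ and $\phi(B)$ inside some $G$-conjugate of $B$. I would begin by checking that $\phi(A)$ and $\phi(B)$ consist of elementary elements. For a non-identity $a\in A$: if $a$ has finite order then so does $\phi(a)$, and an element of H\'enon type has infinite order, so $\phi(a)$ is elementary; if $a$ has infinite order then the Zariski closure of $\langle a\rangle$ is a positive-dimensional algebraic subgroup of $A$ contained in $Z_A(a)$, so $Z_A(a)$ is uncountable, whence $\phi(Z_A(a))\subset Z_G(\phi(a))$ is uncountable, and Theorem~\ref{L}(b) forces $\phi(a)$ to be elementary. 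A parallel argument using the explicit centralizer structure in $B$ handles $\phi(B)$. Since $\phi|_A,\phi|_B$ are injective and $A,B$ are non-abelian, Theorem~\ref{L}(a) places each of $\phi(A),\phi(B)$ inside a conjugate of $A$ or of $B$. The non-solvable subgroup $\SL_2(\c)\subset A$ makes $\phi(A)$ non-solvable, ruling out a conjugate of $B$. Dually, $\phi(B)\supset\phi(B_0)$ is non-nilpotent by Proposition~\ref{nonilp} and contains the infinite-rank divisible normal abelian subgroup $\phi(\c[y])$; the description of maximal abelian unipotent subgroups of $A$, together with Proposition~\ref{auxi}, obstructs such a subgroup inside a conjugate of $A$. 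Hence $\phi(A)\subset g_1 A g_1^{-1}$ and $\phi(B)\subset g_2 B g_2^{-1}$ for some $g_1,g_2\in G$.

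Composing with $\Ad_{g_1^{-1}}$, I may now assume $\phi(A)\subset A$, while $\phi(B)\subset fBf^{-1}$ for some $f\in G$. Proposition~\ref{int}(b) gives
$$
\phi(U)=\phi(A)\cap\phi(B)\subset A\cap fBf^{-1}.
$$
Viewing $G=A*_U B$ through its Bass--Serre tree, $A\cap fBf^{-1}$ is the stabiliser of the geodesic joining the vertex fixed by $A$ and the vertex fixed by $fBf^{-1}$; if that geodesic has length at least two, the intersection sits in a proper subgroup of a conjugate of $U$, strictly smaller than the full isomorphic copy $\phi(U)\cong U$ it must contain. The quantitative bound comes from Proposition~\ref{auxi} and its analogue for $A\cap fBf^{-1}$. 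This forces $f\in A\cdot B$, say $f=ab$ with $a\in A$, $b\in B$; composing additionally with $\Ad_{a^{-1}}$ (which preserves $\phi(A)\subset A$ because $a\in A$) yields $\phi(B)\subset bBb^{-1}=B$, as required.

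The main obstacle is this last Bass--Serre step: converting the heuristic ``$A\cap fBf^{-1}$ shrinks as $f$ gets longer'' into the precise statement that $\phi(U)$, being a full isomorphic copy of $U$, cannot fit when $f\notin AB$. Proposition~\ref{auxi} is the prototype of the intersection estimate needed; everything else is bookkeeping through the amalgamated-product structure.
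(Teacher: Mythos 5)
Your overall architecture coincides with the paper's: show $\phi(A)$ and $\phi(B)$ are elementary via centralizers and torsion, invoke Theorem~\ref{L}(a) to place each inside a conjugate of $A$ or of $B$, rule out a conjugate of $B$ for $\phi(A)$ by solvability, and then use $\phi(U)=\phi(A)\cap\phi(B)\subset A\cap\mu A\mu^{-1}$ (resp.\ $A\cap\mu B\mu^{-1}$) together with Proposition~\ref{auxi} to constrain the conjugating element. But the two places where the paper actually does work are left open or argued in a way that would fail.

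First, the subcase $\phi(B)\subset A$ (conjugating element trivial or lying in $A$) is not closed by your obstruction. As an abstract group the translation subgroup of $A$ is a $\mathbb{Q}$-vector space of continuum dimension, hence itself an infinite-rank divisible abelian normal subgroup of a solvable non-nilpotent subgroup of $A$; so neither ``$\phi(B)$ contains an infinite-rank divisible abelian subgroup'' nor Proposition~\ref{nonilp} rules anything out, and Proposition~\ref{auxi} says nothing when $\mu\in A$ since then $A\cap\mu A\mu^{-1}=A$. The paper's argument here is of a different kind: if $\phi(B)\subset A$ then $\phi(G)\subset A$, and composing with the projection $A\to\SL_2(\c)$ gives a finite-dimensional representation of $G$, trivial by Theorem~\ref{nonexis}; hence $\phi(G)$ lies in the abelian translation subgroup $\mathcal{T}$, contradicting injectivity of $\phi|_A$ from Proposition~\ref{int}(a). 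Second, the step you flag as ``the main obstacle'' is precisely the paper's key point and is not bookkeeping: $A\cap\mu A\mu^{-1}$ is not ``strictly smaller'' than $U$ in any cardinality sense. The paper's quantitative input is a torsion obstruction: $U$ contains the cyclic groups generated by $(\lambda x,\lambda^{-1}y)$ for every root of unity $\lambda$, hence cyclic subgroups of every finite order, whereas every finite subgroup of the group $H$ of Proposition~\ref{auxi} meets the torsion-free unipotent part trivially and so embeds into $\Z_{n+1}$, giving order at most $n+1$. Combined with the reduced-word reduction $A\cap\mu A\mu^{-1}\subset A\cap w_nAw_n^{-1}$ for the last syllable $w_n\in J$ of $\mu$, this excludes every $\mu\notin A$, and the representation-theoretic argument handles $\mu\in A$. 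Your Bass--Serre reformulation of Case~2 is a fair repackaging of the paper's normal-form computation, but it needs the same torsion input to conclude; as written, both load-bearing steps are missing.
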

\begin{proof}
By Theorem~\ref{L}(b) each element of subgroups
$\phi(A)$ and $\phi(B)$ is elementary. Hence, by part (a)
of the same theorem both $\phi(A)$ and $\phi(B)$
can be conjugated to either $A$ or $B$. The subgroup
$\phi(A)$ can not be conjugated to $B$, since $B$ is solvable
while $\phi(A)$, being isomorphic to $A$, is not. So
$ \phi(A) \, \subset \, \sigma A \sigma^{-1}$ for some $\sigma \in G$.
Composing $\phi$ by $\Ad_{\sigma^{-1}}$ we can assume that
$\phi(A) \subset A$.

For  $\phi(B)$ we have that it is conjugate to $A$ or $B$.
We now discuss each case.

Case $1$. Assume that $\phi(B) \subset \mu A \mu^{-1}$ for some $\mu \in G$.
Let $\mu$ be of length 1. If $\mu \in A$ then $\phi(B) \subset A$ and this implies $\phi(G) \subset A$.
Taking projection of $A$ onto $\SL_2(\c)$ we get a representation of $G$
which by Theorem~\ref{nonexis} is trivial. Therefore we obtain a homomorphism 
$G \rightarrow \mathcal{T}$, where $\mathcal{T}=\{(x+e,y+f) \, | \, e, f \in \c \}$ the translation subgroup,  which must be injective when restricted to $A$ and $B$ by Proposition
\ref{int}. This is impossible,
hence $\mu \notin A$.

Now assume that $\mu \in B$. Without loss of generality we can assume $\mu$ is a non-trivial representative in $B/U$, with $\mu =(x+p(y),y)$ with $p \in y^2 \c[y] \, \backslash \, 0$.
Then by Proposition~\ref{int}(b) we have $\phi(U) \subseteq A \cap \mu A \mu^{-1}$.
Then according to Proposition \ref{auxi} the group $U$ embeds into  \eqref{eq2}.
Note that $U$ contains a cyclic group of any finite order, which contradicts to the last embedding.
Hence $\mu \notin B$.

Let
\begin{equation}\label{eq3}
\mu = w_0 \,w_1 \, \ldots \, w_n \,  , \quad n\geq 2
\end{equation}
be a reduced word of length $n$ in $G$ where $w_0 \in U$ and $w_i$ for $i>0$ are
in $I$ or $J$. Without loss of generality we can assume $w_n \in J$.
Once again by Proposition~\ref{int}(b)  we must have $\phi(U) \subseteq A \cap \mu A \mu^{-1}$.
Then
$$\mu \, a \, \mu^{-1}= w_0 \,w_1\, \ldots \,w_n \, a \, w_n^{-1}\, \ldots
\, w_1^{-1} w_0 ^{-1} \, ,$$
and $\mu a \mu^{-1} \in A$ if and only if
$$
\nu \, = \, w_0^{-1}\, \mu \, a \, \mu^{-1} \, w_0 \, = \,  w_1\,...
\,w_n \, a \,w_n^{-1}\,...\, w_1^{-1} \, .$$
is in $A$. Now either  $a\in A \backslash U$ or $a \in U$. In the first case
$\nu$ is a word of length $2n+1$ so it can not be in $A$. If $a \in U$
then  the element $w_n a w_n^{-1}$ is in $U$ or $B\backslash U$.  
In the latter case $\nu$ is at least of
 length $2n-1>2$ and therefore it is not in $A$. If $w_n a w_n^{-1}$ is in $U$ then all such elements are in $A \cap w_n A w_n^{-1}$, i.e. $A \cap \mu A \mu^{-1}$ can be embedded in $A \cap w_n A w_n^{-1}$. By Proposition \ref{auxi} $A \cap w_n A w_n^{-1}$ is a subgroup of $H$ 
and hence $A \cap \mu A \mu^{-1}$ can be embedded into H
By injectivity of $\phi$ on $U$, this is impossible since $U$ contains a cyclic group of any finite order. Thus $\phi(B)$ can not be conjugated to a subgroup of $A$.

 Case $2$. Let $\phi(B) \subset \mu B \mu^{-1}$ for some $\mu \in G$.
 Then $\phi(U) \subset A \cap \mu B \mu^{-1}$. Now if $\mu$ is in $A$ or $B$
 then we done. So we assume that $\mu$ has a reduced form as in \eqref{eq3}.
  Arguing as above we can show that $U$ can not be isomorphic to a subgroup of
  $ A \cap \mu B \mu^{-1}$.

Thus summarizing all cases we conclude that by composing $\phi$ by an inner auto- morphism of A if necessary, we obtain a homomorphism $\tilde \psi$̃ with properties stated in the theorem.
\end{proof}
We can slightly refine the previous theorem
\begin{lemma}
\la{Z2} 
Let $\tilde \psi$ be as in Theorem \ref{main}. Then composing $\tilde \psi$ by inner
automorphisms $\Ad_g$ with $g \in U$ we obtain $\psi$ such that
$$
\psi(A) \, \subset \, A \, , \,  \psi(B) \, \subset \, B \, ,\,
\psi(U) \, \subset \, U\,
$$
and
$$
\psi ((-x,-y))=(-x,-y)
$$
\end{lemma}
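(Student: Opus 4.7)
The plan is to exploit the fact that $(-x,-y)$ is a non-trivial involution in $U$, classify the involutions of $U$, and then use conjugation by an appropriately chosen element of $U$ to bring $\tilde\psi((-x,-y))$ back to $(-x,-y)$. The key observation that lets us preserve the three subgroup inclusions from Theorem~\ref{main} is that $U \subseteq A \cap B$, so conjugation by any element of $U$ stabilizes each of $A$, $B$, and $U$ setwise.

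First I would record the explicit description
$$
U = \{(ax+by+e,\, a^{-1}y+f) : a \in \c^*,\, b,e,f \in \c\}
$$
and compute $(ax+by+e, a^{-1}y+f)^2$ directly. Requiring this square to equal the identity forces $a^2 = 1$; the case $a = 1$ collapses to the identity, while $a = -1$ forces $b = 0$ and leaves $e, f$ free. Hence the non-trivial involutions of $U$ are exactly the elements
$$
\{(-x+e,\, -y+f) : e,f \in \c\}.
$$

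Next, by Theorem~\ref{main} the restriction $\tilde\psi|_U$ is injective and takes values in $U$, so $\tilde\psi((-x,-y))$ is a non-trivial involution in $U$, which I write as $(-x+e_0, -y+f_0)$. A short calculation shows that conjugation of $(-x+e_0, -y+f_0)$ by the translation $(x+\alpha, y+\beta) \in U$ yields $(-x + 2\alpha + e_0,\, -y + 2\beta + f_0)$, so the choice $\alpha = -e_0/2$ and $\beta = -f_0/2$ sends it to $(-x,-y)$.

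Setting $g := (x - e_0/2,\, y - f_0/2) \in U$ and $\psi := \Ad_g \circ \tilde\psi$, we obtain $\psi((-x,-y)) = (-x,-y)$, and the three inclusions $\psi(A) \subset A$, $\psi(B) \subset B$, $\psi(U) \subset U$ are inherited from $\tilde\psi$ via the observation that $gAg^{-1}=A$, $gBg^{-1}=B$, $gUg^{-1}=U$. There is no real obstacle in this lemma; the only point requiring care is that the conjugating element must lie in $U$ rather than merely in $G$, and the two-parameter family of translations inside $U$ provides exactly enough freedom to kill both components $(e_0, f_0)$ of the translation part.
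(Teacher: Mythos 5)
Your proposal is correct and follows essentially the same route as the paper: identify $\tilde\psi((-x,-y))$ as a non-trivial involution of $U$, hence of the form $(-x+e,-y+f)$, and then conjugate by a translation in $U$ (which stabilizes $A$, $B$, $U$) to normalize it to $(-x,-y)$. Your version merely spells out the classification of involutions in $U$ and the conjugation computation in more detail than the paper does; the sign of the translation you choose differs from the paper's $g=(\frac{e}{2},\frac{f}{2})$ only because of the composition convention, and either choice works.
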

\begin{proof}
By Theorem \ref{main} $\tilde \psi(U) \, \subset \, U$ therefore
$$
\tilde \psi ((-x,-y))=(\lambda \, x+cy+e,\lambda^{-1} y+f) \quad \mbox{for some} \quad \lambda \in \c^* \, , \, c,e,f \in \c
$$
Since $(-x,-y)$ is of order 2 and $\psi$ is injective on $U$ we have $\tilde \psi ((-x,-y))=(-x+e,-y+f)$. Now if we take
$g=(\frac{e}{2} , \frac{f}{2})$, the composition $\psi= \Ad_g \circ \tilde \psi$ gives us desired homomorphism.
\end{proof}
In particular we have
\begin{corollary}
\la{SL2}
 Let $\psi $ be as in Lemma~\ref{Z2}. Then $\psi (\SL_2 (\c)) \subset \SL_2 (\c)$.
\end{corollary}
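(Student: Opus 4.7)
The plan is to characterize $\SL_2(\c)\subset A$ intrinsically as a centralizer involving the element $(-x,-y)$, and then use the two properties of $\psi$ guaranteed by Lemma~\ref{Z2} --- namely $\psi(A)\subset A$ and $\psi((-x,-y))=(-x,-y)$ --- to transport this characterization through $\psi$.

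More precisely, the first step is to verify that the centralizer of $(-x,-y)$ inside $A$ is exactly $\SL_2(\c)$. A general element of $A$ has the form $(ax+by+e,\,cx+dy+f)$ with $ad-bc=1$. A direct computation of the two compositions
$$
(-x,-y)\circ(ax+by+e,\,cx+dy+f) \quad\text{and}\quad (ax+by+e,\,cx+dy+f)\circ(-x,-y)
$$
shows that they agree if and only if $e=f=0$, i.e.\ if and only if the element lies in the linear part $\SL_2(\c)\subset A$. (The other inclusion is obvious, since $(-x,-y)$ corresponds to $-I$, which is central in $\SL_2(\c)$.)

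The second step is to apply $\psi$. For any $g\in\SL_2(\c)$ we have $g\,(-x,-y)=(-x,-y)\,g$, so applying $\psi$ yields
$$
\psi(g)\,\psi((-x,-y)) \;=\; \psi((-x,-y))\,\psi(g).
$$
By Lemma~\ref{Z2}, $\psi((-x,-y))=(-x,-y)$, so $\psi(g)$ commutes with $(-x,-y)$. Moreover $\psi(g)\in\psi(A)\subset A$. Combining these two facts with the centralizer computation of the previous step, $\psi(g)\in\SL_2(\c)$, which is exactly what we need.

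There is essentially no obstacle here: the work has been done in Lemma~\ref{Z2}, which pins down $\psi((-x,-y))$, and the corollary is just an immediate consequence of the intrinsic (centralizer) description of $\SL_2(\c)$ inside the affine symplectic group $A$.
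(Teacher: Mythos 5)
Your proof is correct and follows exactly the paper's argument: the paper likewise observes that $\psi(Z_A(g))\subseteq Z_A(\psi(g))$ and that $Z_A((-x,-y))=\SL_2(\c)$, then concludes using $\psi((-x,-y))=(-x,-y)$ from Lemma~\ref{Z2}. You have merely written out the centralizer computation in more detail than the paper does.
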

\begin{proof}
Let $Z_A (g)$ be the centralizer subgroup of $g$ in $A$. Then $\psi(Z_A (g)) \subseteq Z_A (\psi(g))$.
Now proof follows from $Z_A ((-x,-y))=\SL_2 (\c)$. 
\end{proof}
\section{The proof of Theorem~\ref{hopf}}
We need to show that a homomorphism $\psi$ with properties
described in Theorem \ref{main} and Lemma \ref{Z2} is injective.
By Theorem \ref{main} and Lemma \ref{Z2} for $\psi$ we have induced quotient maps
$$  \bar{\psi}_{A}\, :\, A/U \, \to \, A/U \, , \quad  \bar{\psi}_{B} \,
 : \, B/U \, \to \, B/U$$
To prove our result it is sufficient to show that these maps are injective. Indeed, assume that these two maps are injective
and let $\, g \in G$ be $g \neq 1$. It has a normal form $ g\, = \, w_0 \, w_1 \, \ldots \, w_n \neq 1$ where
$w_0 \in U$ and $w_i$ are in $I$ or $J$. Then
$$
\psi(g) \, = \, \psi(w_0) \, \psi(w_1)\, \ldots \, \psi(w_n) \, ,
$$
 where $\psi(w_0) \in U$ and $\psi(w_i)$ are non-trivial representatives in $A / U$ or  $B / U$ by injectivity of $\bar{\psi}_A$ and $\bar{\psi}_B$.
So the above presentation of $\psi(g)$ is a reduced word and can not be
equal to $1$.

{\it Proof of  injectivity of $\bar{\psi}_A$ }: Recall $A/U$ consists of $gU$,
for $g \in I$. Suppose $\psi(g) \in U$ for some $g \in I$. Since $g$ and $U$ generate $A$,
we have $\psi(A) \subset U$. The latter
contradicts injectivity of $\psi |_A$ since $U$ is solvable while $A$ is not.
 Therefore $\bar{\psi}_{A}$ must be injective.

{\it Proof of  injectivity of $\bar{\psi}_B$}: Coset representatives
 $B/U$ consists of $gU$, where $g \in J$.
 Suppose that $\bar{\psi}_{B}$ is not injective, namely
there is  $g= (x+p(y),y)$ with nonzero $p(y) \in y^2 \, \c[y]$  such that
$\psi((x+p(y),y)) \in U$. Let $\deg (p(y))=n >1$. Then the image of the following is also in $U$
$$
[(x,y+1),(x+p(y),y)]=(x+p(y+1)-p(y),y) \, .
$$
Note that the degree of $p(y+1)-p(y)$ is exactly $n-1$ and taking commutator with $(x,y+1)$
lowers degree exactly by 1. Therefore taking commutator $(x+p(y),y)$ with $(x,y+1)$ exactly
$n-2$ times gives us
$$ [(x,y+1),[(x,y+1), ..., [(x,y+1),(x+p(y),y)]...]\, =\,(x+q(y),y) \, ,$$
 where $q(y)$ is a quadratic polynomial and $\, \psi((x+q(y),y)) \in U$. Therefore
$\, \psi((x+y^2,y))$ is also in $U$.  Note since $\psi(B) \subset B$ we also have $\psi(B^{(i)})  \subset B^{(i)}$
for derived series of $B$. In particular since $B^{(2)}= \{(x+p(y), y)\}$ we obtain that
$\, \psi((x+y^2,y))= (x+cy+e, y)$ for some $c, e \in \c$.
We have
$$
 [(-x,-y), (x+y^2,y)] \, =\, (x-2y^2,y) \quad \mbox{and} \quad [(-x,-y), (x+cy+e, y)] \, =\, (x-2e,y)
$$
Therefore by Lemma \ref{Z2} we have $\psi((x-2y^2,y))= (x-2e,y)$.  On the other hand,
$$
\psi((x-2y^2,y))=\psi((x+y^2,y)^{-2})=
(x+cy+e, y)^{-2}= (x-2cy-2e,y).
$$
Hence $c=0$ and therefore $\, \psi((x+y^2,y))\in \mathcal{T}$. Note then by Corollary \ref{SL2} the element
$\psi((-y,x) \, (x+y^2,y) \, (y,-x))$ is also in $\mathcal{T}$. Therefore $\psi$ maps commutator of $(x+y^2,y)$ and $(-y,x) \, (x+y^2,y) \, (y,-x)$ which is
of length $8$ to identity. This is impossible by Theorem \ref{FL}. This completes a proof of injectivity of $\psi_B$
hence of $\psi$.

\begin{remark}
One can prove using similar arguments that $\Aut (\c^2)$ is also hopfian. However one can easily observe not every
endomorphism of $\Aut (\c^2)$ is injective.
\end{remark}
\bibliographystyle{amsalpha}

\end{document}